\definecolor{verylight}{gray}{0.97}
\definecolor{light}{gray}{0.9}
\definecolor{medium}{gray}{0.85}
\definecolor{dark}{gray}{0.6}
\def\NZQ{\mathbb}               
\def\ZZ{{\NZQ Z}}
\def\RR{{\NZQ R}}
\def\Lc{{\mathcal L}}
\def\G{{\mathcal G}}
\def\Ac{{\mathcal A}}
\def\Bc{{\mathcal B}}
\def\Pc{{\mathcal P}}
\def\Pc{{\mathcal P}}
\def\Pc{{\mathcal P}}
\def\xb{{\mathbf x}}
\def\yb{{\mathbf y}}
\def\eb{{\mathbf e}}
\def\vb{{\mathbf v}}
\def\opn#1#2{\def#1{\operatorname{#2}}} 
\opn\chara{char} \opn\length{\ell} \opn\pd{pd} \opn\rk{rk}
\opn\projdim{proj\,dim} \opn\injdim{inj\,dim} \opn\rank{rank}
\opn\depth{depth} \opn\grade{grade} \opn\height{height}
\opn\embdim{emb\,dim} \opn\codim{codim}
\opn\Cl{Cl}
\opn\Tr{Tr} \opn\bigrank{big\,rank}
\opn\superheight{superheight}\opn\lcm{lcm}
\opn\trdeg{tr\,deg}
	\opn\reg{reg} \opn\lreg{lreg} \opn\ini{in} \opn\lpd{lpd}
	\opn\size{size} \opn\sdepth{sdepth}
	\opn\link{link}\opn\fdepth{fdepth}\opn\lex{lex}
	\opn\tr{tr}
	\opn\type{type}
	\opn\gap{gap}
	\opn\arithdeg{arith-deg}
	\opn\revlex{revlex}
	\opn\div{div} \opn\Div{Div} \opn\cl{cl} \opn\Cl{Cl}
	\opn\Spec{Spec} \opn\Supp{Supp} \opn\supp{supp} \opn\Sing{Sing}
	\opn\Ass{Ass} \opn\Min{Min}\opn\Mon{Mon}
	\opn\Ann{Ann} \opn\Rad{Rad} \opn\Soc{Soc}
	\opn\Im{Im} \opn\Ker{Ker} \opn\Coker{Coker} \opn\Am{Am}
	\opn\Hom{Hom} \opn\Tor{Tor} \opn\Ext{Ext} \opn\End{End}
	\opn\Aut{Aut} \opn\id{id}
	\opn\nat{nat}
	\opn\pff{pf}
	\opn\Pf{Pf} \opn\GL{GL} \opn\SL{SL} \opn\mod{mod} \opn\ord{ord}
	\opn\Gin{Gin} \opn\Hilb{Hilb}\opn\sort{sort}
	\opn\PF{PF}\opn\Ap{Ap}
	\opn\mult{mult}
	\opn\bight{bight}
	\opn\div{div}
	\opn\Div{Div}
	\opn\aff{aff}
	\opn\relint{relint} \opn\st{st}
	\opn\lk{lk} \opn\cn{cn} \opn\core{core} \opn\vol{vol}  \opn\inp{inp} \opn\nilpot{nilpot}
	\opn\link{link} \opn\star{star}\opn\lex{lex}\opn\set{set}
	\opn\width{wd}
	\opn\Fr{F}
	\opn\QF{QF}
	\opn\G{G}
	\opn\type{type}\opn\res{res}
	\opn\conv{conv}
	\opn\Deg{Deg}
	\opn\Sym{Sym}
	\opn\Con{Con}
	\opn\gr{gr}
	\def\pot#1#2{#1[\kern-0.28ex[#2]\kern-0.28ex]}
	\opn\dirlim{\underrightarrow{\lim}}
	\opn\inivlim{\underleftarrow{\lim}}
	\let\to=\rightarrow
	\def\Implies{\ifmmode\Longrightarrow \else
		\unskip${}\Longrightarrow{}$\ignorespaces\fi}
	\def\implies{\ifmmode\Rightarrow \else
		\unskip${}\Rightarrow{}$\ignorespaces\fi}
	\def\iff{\ifmmode\Longleftrightarrow \else
		\unskip${}\Longleftrightarrow{}$\ignorespaces\fi}
	\newtheorem{Theorem}{Theorem}[section]
	\newtheorem{Lemma}[Theorem]{Lemma}
	\newtheorem{Example}[Theorem]{Example}
	\let\epsilon\varepsilon
	\let\kappa=\varkappa
	\def\qed{\ifhmode\textqed\fi
		\ifmmode\ifinner\quad\qedsymbol\else\dispqed\fi\fi}
	\def\textqed{\unskip\nobreak\penalty50
		\hskip2em\hbox{}\nobreak\hfil\qedsymbol
		\parfillskip=0pt \finalhyphendemerits=0}
	\def\dispqed{\rlap{\qquad\qedsymbol}}
	\opn\dis{dis}
	\def\pnt{{\raise0.5mm\hbox{\large\bf.}}}
	\opn\Lex{Lex}
\begin{document}

\title{Reflexive polytopes and discrete polymatroids}
\author {J\"urgen Herzog and Takayuki Hibi}
\address{J\"urgen Herzog, Fachbereich Mathematik, Universit\"at Duisburg-Essen, Campus Essen, 45117 Essen, Germany} 
\email{juergen.herzog@uni-essen.de}		
\address{Takayuki Hibi, Department of Pure and Applied Mathematics, Graduate School of Information Science and Technology, Osaka University, Suita, Osaka 565-0871, Japan}
\email{hibi@math.sci.osaka-u.ac.jp}
\dedicatory{ }
\keywords{}
\subjclass[2010]{Primary 52B20; Secondary 05E40}
\thanks{The second author was supported by JSPS KAKENHI 19H00637.}
\begin{abstract}
A classification of discrete polymatroids whose independence polytopes are reflexive will be presented. 
\end{abstract}	
\maketitle
\thispagestyle{empty}

\section*{Introduction}
The discrete polymatroid is introduced in \cite{HH}.   In the present paper, as a supplement to \cite{HH}, a classification of discrete polymatroids whose independence polytopes are reflexive will be presented.  We refer the reader to \cite{HH} and \cite{HHgtm260} for fundamental materials on discrete polymatroids. 

\section{Reflexive polytopes}
	 
A convex polytope $\Pc \subset \RR^d$ of dimension $d$ is called a {\em lattice polytope} if each of its vertices belongs to $\ZZ^d$.  A {\em reflexive polytope} is a lattice polytope $\Pc \subset \RR^d$ of dimension $d$ for which the origin of $\RR^d$ belongs to the interior of $\Pc$ and the dual polytope $\Pc^\vee = \{\xb \in \RR^d : \langle \xb, \yb \rangle \leq 1, \forall \yb \in \Pc\}$ of $\Pc$ is a lattice polytope, where $\langle \xb, \yb \rangle$ stands for the canonical inner product of $\RR^d$.  A lattice polytope which can be a reflexive polytope by parallel shift is also called reflexive.

Let $\eb_1, \ldots, \eb_d$ denote the canonical basis vectors of $\RR^d$.  Let $P \subset \ZZ^d_{+}$ be a {\em discrete polymatroid} \cite[Definition 2.1]{HH} on the ground set $[d]$.  In what follows one assumes that each $\eb_i$ belongs to $P$.  Let $\Pc = \Pc_P \subset \RR^d$ denote the lattice polytope which is the convex hull of $P$ in $\RR^d$.  We call $\Pc$ the {\em independence polytope} of $P$.  One has $\dim \Pc = d$.  Let $\rho = \rho_P$ denote the {\em ground set rank function} \cite[pp.~243]{HH} of $\Pc$.  It follows from \cite[Theorem 7.3]{HH} that    

\begin{Lemma}
\label{aaaaa}
The independence polytope $\Pc$ is reflexive if and only if, for each subset $X \subset [d]$ which is $\rho$-closed and $\rho$-inseparable \cite[pp.~257--258]{HH}, one has $\rho(X) = |X| + 1$. 
\end{Lemma}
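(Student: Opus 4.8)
The plan is to combine the facet description of $\Pc$ supplied by \cite[Theorem 7.3]{HH} with the standard reflexivity criterion phrased in terms of facet-supporting hyperplanes. Recall first that \cite[Theorem 7.3]{HH} yields the irredundant inequality presentation
\[
\Pc = \Bigl\{\xb \in \RR^d : x_i \geq 0 \ (i \in [d]),\ \textstyle\sum_{i \in X} x_i \leq \rho(X) \text{ for every $\rho$-closed and $\rho$-inseparable } X \subset [d]\Bigr\}.
\]
Because $\mathbf{0}$ and each $\eb_j$ lie in $\Pc$, every coordinate hyperplane $\{x_i = 0\}$ is in fact a facet, so the facets of $\Pc$ have outer primitive integral normals $-\eb_i$ (with right-hand side $0$) and $\eb_X := \sum_{i \in X} \eb_i$ (with right-hand side $\rho(X)$), as $X$ ranges over the $\rho$-closed, $\rho$-inseparable subsets.

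Next I would invoke the well-known characterization: a full-dimensional lattice polytope $\mathcal{Q} = \{\xb : \langle \xb, \ab_F \rangle \leq b_F,\ F \in \mathcal{F}\}$, where the $\ab_F$ are primitive integral and $\mathcal{F}$ indexes the facets, is reflexive up to a lattice translation if and only if there is a lattice point $\cb \in \ZZ^d$ with $\langle \cb, \ab_F \rangle = b_F - 1$ for every facet $F$. Indeed, such a $\cb$ automatically lies in the interior of $\mathcal{Q}$ (it satisfies every facet inequality strictly), and translating by $-\cb$ brings each facet hyperplane into the form $\langle \xb, \ab_F \rangle = 1$, so that $\mathcal{Q}^\vee$ is the lattice polytope $\conv\{\ab_F : F \in \mathcal{F}\}$; conversely, the vertices of $\mathcal{Q}^\vee$ for a reflexive $\mathcal{Q}$ are exactly the facet normals scaled to right-hand side $1$.

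Applying this to $\Pc$, the facet inequalities $-x_i \leq 0$ force $\langle \cb, -\eb_i \rangle = -1$, i.e.\ $c_i = 1$ for every $i$; hence the only candidate translation vector is $\cb = \mathbf{1} = (1,\dots,1)$. For this $\cb$, the remaining facet conditions $\langle \mathbf{1}, \eb_X \rangle = \rho(X) - 1$ read precisely $|X| = \rho(X) - 1$, that is $\rho(X) = |X| + 1$, for every $\rho$-closed and $\rho$-inseparable $X$. Therefore $\Pc$ is reflexive if and only if $\rho(X) = |X| + 1$ holds for all such $X$, which is the assertion.

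The argument is essentially bookkeeping once \cite[Theorem 7.3]{HH} is available; the only point that needs care is to be sure the presentation given there is \emph{irredundant}, so that the listed inequalities are exactly the facet-defining ones --- in particular that each coordinate hyperplane $\{x_i = 0\}$ genuinely is a facet (which is where $\mathbf{0}, \eb_1, \dots, \eb_d \in \Pc$ is used) --- since it is precisely the coordinate facets that pin the translation vector down to $\mathbf{1}$ and thereby convert reflexivity into the numerical condition on $\rho$.
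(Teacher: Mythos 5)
Your argument is correct and is essentially the derivation the paper intends: the lemma is stated there as an immediate consequence of \cite[Theorem 7.3]{HH} (the facet description of $\Pc$), and your proof simply spells out the standard criterion that a lattice polytope is reflexive up to lattice translation exactly when some lattice point lies at lattice distance one from every facet, which here pins the translation vector to $(1,\dots,1)$ via the coordinate facets and turns reflexivity into $\rho(X)=|X|+1$ for all $\rho$-closed, $\rho$-inseparable $X$. The one point you rightly flag --- that the listed inequalities are irredundant, so every such $X$ genuinely defines a facet (needed for the ``only if'' direction) --- is precisely what \cite[Theorem 7.3]{HH} supplies.
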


A {\em sublattice} of $2^{[d]}$ is a collection $\Lc$ of subsets of $[d]$ with $\emptyset \in \Lc$ and $[d] \in \Lc$ such that, for all $A$ and $B$ belonging to $\Lc$, one has $A \cap B \in \Lc$ and $A \cup B \in \Lc$. 

\begin{Theorem}
\label{classification}
(a) Let $P$ be a discrete polymatroid on the ground set $[d]$ and $\rho = \rho_P$ the ground set rank function of $\Pc$.  Let $\Ac$ be the set of $\rho$-closed and $\rho$-inseparable subsets of $\Pc$.  If $\Pc$ is reflexive, then $\Ac \cup \{\emptyset\}$ is a sublattice of $2^{[d]}$.

(b) Conversely, given a sublattice $\Lc$ of $2^{[d]}$, there exists a unique discrete polymatroid $P$ on the ground set $[d]$ for which $\Lc$ is the set of $\rho$-closed and $\rho$-inseparable subsets of $\Pc$ and $\Pc$ is reflexive.  
\end{Theorem}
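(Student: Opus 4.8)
The plan is to read off from Lemma~\ref{aaaaa} a description of $\rho$ and then argue by submodularity. Write $\bar Z$ for the $\rho$-closure of a set $Z$, and note first that, when $\Pc$ is reflexive, $\rho(Z)\ge |Z|+1$ for every nonempty $Z\subseteq[d]$, with equality exactly when $Z\in\Ac$: indeed $\rho(Z)=\rho(\bar Z)$, and writing $\bar Z=Z_1\sqcup\cdots\sqcup Z_k$ as the disjoint union of its $\rho$-closed $\rho$-inseparable components (so $\rho(\bar Z)=\sum_j\rho(Z_j)$), Lemma~\ref{aaaaa} gives $\rho(Z)=|\bar Z|+k\ge|Z|+1$, with equality iff $\bar Z=Z$ and $k=1$. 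For part (a), take $X,Y\in\Ac$ with $X\cap Y\ne\emptyset$; then submodularity, Lemma~\ref{aaaaa} and this bound give
\[
(|X|+1)+(|Y|+1)=\rho(X)+\rho(Y)\ \ge\ \rho(X\cup Y)+\rho(X\cap Y)\ \ge\ (|X\cup Y|+1)+(|X\cap Y|+1),
\]
and since the outer expressions coincide, both inequalities are equalities, so the equality criterion forces $X\cup Y,\,X\cap Y\in\Ac$. A little further bookkeeping of the same kind, together with the fact that here $[d]$ is itself $\rho$-inseparable, disposes of disjoint $X,Y$ and puts $[d]$ into $\Ac$; hence $\Ac\cup\{\emptyset\}$ is a sublattice of $2^{[d]}$.

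For the existence half of part (b) I would write $\rho$ down explicitly. Since $\Lc$ is intersection-closed and $[d]\in\Lc$, each $X\subseteq[d]$ lies in a smallest member $\widehat X:=\bigcap\{Y\in\Lc:X\subseteq Y\}$ of $\Lc$; put $\rho(\emptyset)=0$ and $\rho(X)=|\widehat X|+1$ for $X\ne\emptyset$. The essential point is that $\rho$ is a bona fide ground set rank function: integrality and monotonicity are immediate, and submodularity uses \emph{both} lattice axioms, via $\widehat{X\cup Y}=\widehat X\cup\widehat Y$ (union-closedness) and $\widehat{X\cap Y}\subseteq\widehat X\cap\widehat Y$ (intersection-closedness), whence $\rho(X)+\rho(Y)=|\widehat X\cup\widehat Y|+|\widehat X\cap\widehat Y|+2\ge|\widehat{X\cup Y}|+|\widehat{X\cap Y}|+2\ge\rho(X\cup Y)+\rho(X\cap Y)$, with the evident modification when $X\cap Y=\emptyset$. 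Let $P$ be the discrete polymatroid with ground set rank function $\rho$; since $\rho(\{i\})=|\widehat{\{i\}}|+1\ge 2$, each $\eb_i\in P$. From $\rho(X\cup\{i\})-\rho(X)=|\widehat X\cup\widehat{\{i\}}|-|\widehat X|$ one sees that $X$ is $\rho$-closed iff $\widehat{\{i\}}\not\subseteq\widehat X$ for all $i\notin X$, i.e.\ iff $\widehat X=X$, i.e.\ iff $X\in\Lc$; and every $X\in\Lc$ is automatically $\rho$-inseparable, since a nontrivial disjoint splitting $X=X_1\sqcup X_2$ would force $\rho(X_1)+\rho(X_2)\ge(|X_1|+1)+(|X_2|+1)=|X|+2>|X|+1=\rho(X)$. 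So the set of $\rho$-closed $\rho$-inseparable subsets is precisely $\Lc$, on which $\rho$ takes the value $|Y|+1$, whence $\Pc=\Pc_P$ is reflexive by Lemma~\ref{aaaaa}.

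For uniqueness: if $P'$ is a discrete polymatroid on $[d]$ with $\Pc_{P'}$ reflexive and with $\Lc$ as its set of $\rho$-closed $\rho$-inseparable subsets, then Lemma~\ref{aaaaa} forces $\rho_{P'}(Y)=|Y|+1$ for $Y\in\Lc$; since the facets of an independence polytope are the hyperplanes $x_i=0$ together with the hyperplanes $\sum_{i\in Y}x_i=\rho_{P'}(Y)$ for $Y$ ranging over the $\rho_{P'}$-closed $\rho_{P'}$-inseparable sets, $\Pc_{P'}$ and $\Pc_P$ are cut out by the same inequalities, so $\Pc_{P'}=\Pc_P$ and therefore $P'=P$. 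I expect the main obstacle to be the verification in part (b) that the formula $\rho(X)=|\widehat X|+1$ genuinely defines a polymatroid ground set rank function and that its polytope is cut out by exactly the inequalities indexed by $\Lc$; that is where both axioms for $\Lc$ do the real work — the identity $\widehat{X\cup Y}=\widehat X\cup\widehat Y$ in particular — and once it is in place, matching $\Ac$ with $\Lc$ and invoking Lemma~\ref{aaaaa} in both directions is routine.
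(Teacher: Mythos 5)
Your part (b) and the intersecting-pair case of part (a) are sound, but there is a genuine gap in (a) exactly at the point you defer to ``a little further bookkeeping'': the disjoint case and the membership $[d]\in\Ac$. You invoke ``the fact that here $[d]$ is itself $\rho$-inseparable,'' but this is not a fact you have established, and since $[d]$ is automatically $\rho$-closed, the assertion ``$[d]$ is $\rho$-inseparable'' is literally the statement $[d]\in\Ac$ that the sublattice property requires — you are assuming part of the conclusion. Moreover, your submodularity sandwich genuinely breaks for disjoint $X,Y\in\Ac$: there $\rho(X\cap Y)=\rho(\emptyset)=0$, which does not satisfy the lower bound $|X\cap Y|+1$, and all one gets is $|X\cup Y|+1\le\rho(X\cup Y)\le\rho(X)+\rho(Y)=|X\cup Y|+2$, which does not by itself force $X\cup Y\in\Ac$. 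The paper closes the loop differently: it first observes that, because each $\eb_i\in\Pc$ and $\Pc$ is bounded, every $i\in[d]$ must occur in some facet-defining set, i.e.\ $\bigcup_{A\in\Ac}A=[d]$, and then obtains $[d]\in\Ac$ from closure under unions. To repair your argument you need an input of this kind; note that even then the disjoint-union step requires its own careful treatment (the paper's displayed strict inequality also tacitly presupposes a nonempty intersection), so it cannot be dismissed as routine.

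Apart from this, your route differs from the paper's in an interesting way and is otherwise correct. For the key inequality $\rho(Z)\ge|Z|+1$ (with equality exactly on $\Ac$) the paper argues geometrically, exhibiting explicit points of $\Pc$ (namely $3\eb_i$, and $\frac{q}{q-1}\sum_{j}\eb_{i_j}$ for $|Z|=q\ge2$) against the facet description of \cite[Proposition 7.2]{HH}; you instead argue structurally via the $\rho$-closure and its decomposition into $\rho$-closed, $\rho$-inseparable components with additive rank. That is a valid and arguably cleaner route, but the decomposition itself needs a justification or citation — in particular, that the parts of an additive splitting of a closed set are again closed uses the standing hypothesis $\eb_i\in P$ (so $\rho(\{i\})\ge1$). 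In part (b) your $\rho(X)=|\widehat X|+1$ is the same function as the paper's $\min\{|A|+1: X\subseteq A,\ A\in\Lc\}$, and you actually carry out the verifications (submodularity via $\widehat{X\cup Y}=\widehat X\cup\widehat Y$ and $\widehat{X\cap Y}\subseteq\widehat X\cap\widehat Y$, identification of the $\rho$-closed and $\rho$-inseparable sets with $\Lc$) that the paper leaves to \cite[Theorem 9.1]{HH} and to the reader; the uniqueness argument via Lemma \ref{aaaaa} and the facet description is the same as the paper's.
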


\begin{proof}
(a)  If the independence polytope $\Pc$ of $P$ is reflexive, then Lemma \ref{aaaaa} says that $\rho(A) = |A| + 1$ for each $A \in \Ac$.  It follows from \cite[Proposition 7.2]{HH} that $\Pc$ consists of those $(x_1, \ldots, x_d) \in \RR^d$ for which 
\[
x_i \geq 0, \, \, \, \, \, i = 1, 2, \ldots, d,
\] 
and 
\begin{eqnarray}
\label{facet}
\sum_{i \in A} x_i \leq |A| + 1, \, \, \, \, \, A \in \Ac.
\end{eqnarray}
Since each $\eb_i \in \Pc$ and $\Pc$ is compact, it follows that
\[
\bigcup_{A \in \Ac} A = [d].
\]
Furthermore, if $X \not\in \Ac$, then 
$\rho(X) > |X| + 1$.  

In fact, if $|X|$ = 1 and $X = \{i\}$, then $3 \eb_i \in \Pc$ and $\rho(X) > 2$.  
In general, if $|X| = q \geq 2$ and $X = \{i_1, \ldots, i_q\}$ with $i_1 < \cdots < i_q$, then, 
one has 
\begin{eqnarray}
\label{point}
\vb = \frac{q}{\,q - 1\,}\sum_{j=1}^q\eb_{i_j} \in \Pc 
\end{eqnarray}  
and 
\[
\rho(X) \geq  |\vb| = \frac{q^2}{\,q - 1\,} > q + 1.
\]

To see why (\ref{point}) holds, one shows that $\vb$ satisfies each of the inequalities (\ref{facet}).  Let $A \in \Ac$ with $X \subsetneq A$, then 
\[
\frac{q^2}{\,q - 1\,} \leq q+2 = |X| + 2 \leq	 |A| + 1.  
\]  
Let $A \in \Ac$ with $|X \cap A| = k < q$, then  
\[
k \frac{q}{\,q - 1\,} \leq k + 1 \leq |A| + 1.
\]

One claims that $\Ac \cup \{\emptyset\}$ is a sublattice of $2^{[d]}$.  Let $A, B \in \Ac$ and suppose that either $A \cup B \not\in \Ac \cup \{\emptyset\}$ or $A \cap B \not\in \Ac \cup \{\emptyset\}$.  Then
\[
\rho(A) + \rho(B) = |A| + |B| + 2 = |A \cup B| + |A \cap B| + 2 < \rho(A \cup B) + \rho(A \cup B),
\]
which contradict the fact that $\rho$ is submodular.  Furthermore, since $\bigcup_{A \in \Ac} A = [d]$, one has $[d] \in \Ac$, as desired. 

\medskip

(b) By virtue of \cite[Theorem 9.1]{HH} one introduces the nondecreasing submodular function $\rho: 2^{[n]} \to \ZZ_+$ by setting   
\[
\rho(X) = \min\{|A| + 1 : X \subseteq A, A \in \Lc\}, \, \, \, \, \, \emptyset \neq X \subset [d]
\] 
together with $\rho(\emptyset) = 0$.  Let $P$ be the discrete polymatroid on the ground set $[n]$ and $\rho$ the ground set rank function of $\Pc$.  Then $\Lc$ is the set of $\rho$-closed and $\rho$-inseparable subsets of $[d]$.  Furthermore, Lemma \ref{aaaaa} guarantees that the independence polytope $\Pc$ of $P$ is reflexive.  On the other hand, suppose that $P'$ is a discrete polymatroid on the ground set $[d]$ and $\rho'$ its ground set rank function of the independence polytope $\Pc'$ of $P'$ for which $\Lc$ is the set of $\rho'$-closed and $\rho'$-inseparable subsets of $P'$ and for which $\Pc'$ is reflexive.  Then by using Lemma \ref{aaaaa} again one has $\rho'(A) = |A| + 1$ for each $A \in \Lc$.  Hence $\Pc = \Pc'$ (\cite[Proposition 7.2]{HH}).  Thus $P = P'$ (\cite[Theorem 3.4]{HH}), as desired.
\end{proof}

\section{Examples}

\begin{Example}
\label{exampleA}
{\em
Let $\Pc \subset \RR^3$ be the convex polytope whose facets are each $x_i = 0$ together with
\[
x_1 + x_2 = 3, \, \, \, x_2 + x_3 = 3, \, \, \, x_1 + x_2 + x_3 = 4.
\]
It can be checked that $\Pc$ is reflexive.  However, $\Pc$ cannot be the independence polytope of a discrete polymatroid on the ground set $[3]$.  In fact, if $\Pc$ is the independence polytope of a discrete polymatroid $P$ on the ground set $[3]$, then both $u = (0,3,0)$ and $v = (1,2,1)$ belong to the set of bases \cite[p.~245]{HH} of $P$.  One has $|u| < |v|$, which contradicts \cite[Theorem 2.3]{HH}.
}
\end{Example}

\begin{Example}
\label{exampleB}
{\em
Let $\Lc$ be a chain of length $d$ of $2^{[d]}$, say,
\[
\Lc = \{ \emptyset, \{d\}, \{d-1,d\}, \ldots, \{1, \ldots, d\}\}.
\]
Let $P$ denote the discrete polymatroid constructed in Theorem \ref{classification} (b).  Let
\[
\Bc = \{[d], [d], [d-1], \ldots, [2],[1]\}.
\]
Let $P'$ denote the transversal polymatroid \cite[p.~267]{HH} presented by $\Bc$.  If $X \subset [d]$ and $i = \min(X)$, then it follows from the proof of Theorem \ref{classification} (b) that
\[
\rho_{P}(X) = |\{i, i+1, \ldots, d\}| + 1 = (d - (i-1)) + 1 = d + 2 - i 
\]
On the other hand, by the definition of the ground set rank function of a transversal polymatroid, one has 
\[
\rho_{P'}(X) = (d + 1) - (i - 1) = d + 2 - i.
\] 
Hence $\rho_{P} = \rho_{P'}$.  Thus $P = P'$.
}
\end{Example}

It would be of interest for which sublattice $\Lc$ of $2^{[d]}$ the discrete polymatroid constructed in Theorem \ref{classification} (b) can be a transversal polymatroid.


\begin{thebibliography}{}
\bibitem{HH} 
J.~Herzog and T.~Hibi, Discrete polymatroids, {\em J. Algebraic Combin.} {\bf 16} (2020), 239--268.
\bibitem{HHgtm260} 
J.~Herzog and T.~Hibi, ``Monomial Ideals,'' GTM 260, Springer, 2011. 	
\end{thebibliography}
\end{document}